\newtheorem{theorem}{Theorem}[section]
\newtheorem{definition}[theorem]{Definition}
\newtheorem{proposition}[theorem]{Proposition} 
\newtheorem{remark}[theorem]{Remark}
\title{Sample-Path Equivalent CM Models}
\author{
  Reza Rezaie and X. Rong Li \\
 Department of Electrical Engineering\\
 University of New Orleans\\
New Orleans, LA 70148 \\
  \texttt{rrezaie@uno.edu} and \texttt{xli@uno.edu} \\
}
\begin{document}

\maketitle

\begin{abstract}
The conditionally Markov (CM) sequence contains different classes including Markov, reciprocal, and so-called $CM_L$ and $CM_F$ (two special classes of CM sequences). Each class has its own forward and backward dynamic models. The evolution of a CM sequence can be described by different models. For example, a Markov sequence can be described by a Markov model, as well as by reciprocal, $CM_L$, and $CM_F$ models. Also, sometimes a forward model is available, but it is desirable to have a backward model for the same sequence (e.g., in smoothing). Therefore, it is important to study relationships between different dynamic models of a CM sequence. This paper discusses such relationships between models of nonsingular Gaussian (NG) $CM_L$, $CM_F$, reciprocal, and Markov sequences. Two models are said to be explicitly sample-equivalent if not only they govern the same sequence, but also a one-one correspondence between their sample paths is made explicitly. A unified approach is presented, such that given a forward/backward $CM_L$/$CM_F$/reciprocal/Markov model, any explicitly equivalent model can be obtained. As a special case, a backward Markov model explicitly equivalent to a given forward Markov model can be obtained regardless of the singularity/nonsingularity of the state transition matrix of the model.

\end{abstract}

\textbf{Keywords:} Conditionally Markov, reciprocal, Markov, Gaussian sequence, dynamic model, explicitly sample-equivalent.

\section{Introduction}

Markov processes have been widely used in many different applications for modeling random phenomena. In some problems more general stochastic processses (e.g., reciprocal processes or CM processes) are required. CM and reciprocal processes have been used in many different applications, including stochastic mechanics, intent inference, image processing, trajectory modeling, and acausal systems \cite{Levy_1}--\cite{Krener1}. Dynamic models of CM, reciprocal, and Markov processes play a very important role in the application of these processes. This paper elaborates on the relationship between these models. 

Gaussian CM processes were introduced in \cite{Mehr}. Reciprocal processes were introduced in \cite{Bernstein} related to a problem posed by E. Schrodinger \cite{Schrodinger_1}--\cite{Schrodinger_2}. Later, reciprocal processes were studied more in \cite{Slepian}--\cite{Moura}. Dynamic models and characterizations of NG $CM_L$ and $CM_F$ sequences were obtained in \cite{CM_Part_I_Conf}. A Gaussian sequence is reciprocal if and only if (iff) it is both $CM_L$ and $CM_F$ \cite{CM_Part_II_A_Conf}. A dynamic model with locally correlated dynamic noise governing the NG reciprocal sequence was presented in \cite{Levy_Dynamic}. Dynamic models with white dynamic noise governing the NG reciprocal sequence were developed in \cite{CM_Part_II_A_Conf}.

Consider stochastic sequences defined over $[0,N]=\lbrace 0,1,\ldots,N \rbrace$. For convenience, let the index be time. A sequence is Markov iff conditioned on the state at any time $k$, the subsequences before and after $k$ are independent. A sequence is reciprocal iff conditioned on the states at any two times $k_1$ and $k_2$, the subsequences inside and outside the interval $[k_1,k_2]$ are independent. In other words, ``inside" and ``outside" are independent given the boundaries. A sequence is $CM_L$ ($CM_F$) iff conditioned on the state at time $N$ ($0$), the sequence is Markov over $[0,N-1]$ ($[1,N]$). The subscripts ``$L$" (``$F$") is used because the conditioning is at the \textit{last} (\textit{first}) time of the interval.  

The Markov sequence is a special class of reciprocal sequences. The reciprocal sequence is a special class of $CM_L$/$CM_F$ sequences. Thus, evolution of a Markov sequence can be governed by a Markov/reciprocal/$CM_L$/$CM_F$ model. Similarly, evolution of a reciprocal sequence can be governed by a reciprocal/$CM_L$/$CM_F$ model. Therefore, a CM sequence can have more than one model. These models are \textit{equivalent} in the sense that they govern the same sequence (i.e., the sequences governed by the models have the same distribution). Also, sometimes only a forward (backward) model is available when the corresponding backward (forward) one is desired or required. These forward and backward models are also equivalent since they govern the same sequence. In some cases, the above definition of equivalent models is not sufficient because it is only about distributions, not each sample path. It is only equivalent for the set, not element-wise equivalent. The two-filter smoothing approach is an example, where to verify the conditions required for derivation, one needs relationship between sample paths of dynamic noises and boundary values of forward and backward Markov models for the same sample path of the sequence \cite{Wax_Kailath}--\cite{Alan_Wilskey}. Given a model and a sample path of its dynamic noise and boundary values\footnote{For a forward (backward) Markov model, boundary values mean the initial (final) values.} corresponding to an arbitrary sample path of the sequence, it is desirable to obtain an equivalent model and a sample path of its dynamic noise and boundary values leading to the same sample path of the sequence. In other words, it is desirable to find relationship between the dynamic noises and boundary values (of two equivalent models) leading to the same sample path of the governed sequence. Therefore, such models with an explicit relationship between their sample paths are said to be \textit{explicitly sample-equivalent}. It is important to find relationships between the equivalent models because one model can be more easily applicable than the other in some applications. For example, the reciprocal model of \cite{Levy_Dynamic} is driven by colored noise and not necessarily easy to apply for trajectory modeling \cite{DD_Conf}--\cite{DW_Conf}. But the equivalent recirpocal $CM_L$ model of \cite{CM_Part_II_A_Conf} is driven by white noise and its application is straightforward. 

Determination of a NG backward Markov model based on its forward model has been the topic of several papers \cite{MB_1}--\cite{Verghese}. Equivalence of the backward Markov model in \cite{MB_1}--\cite{MB_4} was derived based on equality of the second moments calculated by forward and backward models, which does not deal with specific sample paths. \cite{Verghese} presented an explicitly equivalent backward Markov model only for forward models with nonsingular state transition matrices. In the case of a singular state transition matrix, the derivation of \cite{Verghese} does not provide explicit equivalence (i.e., the relationship between sample paths of the dynamic noises and the boundary values) of forward and backward models. In such a case, \cite{Verghese} only provides parameters of the backward model. Given a NG Markov model, an approach was presented in \cite{Levy_Dynamic} for the determination of an explicitly equivalent reciprocal model with locally correlated dynamic noise governing the same Markov sequence. 

The main contributions of this paper are as follows. It discusses relationships between dynamic models governing NG $CM_L$, $CM_F$, reciprocal, and Markov sequences. A unified approach is presented, such that within these classes given a model, any explicitly equivalent model can be obtained. As a special case, a backward Markov model explicitly equivalent to a forward Markov model can be obtained. Unlike \cite{Verghese}, this approach works for both singular and nonsingular state transition matrices. The explicitly equivalent reciprocal model obtained in \cite{Levy_Dynamic} can be derived by our approach. 

Section \ref{Definitions} reviews definitions and models of $CM_L$, $CM_F$, reciprocal, and Markov sequences. Also, definition of explicitly sample-equivalent models is presented. Section \ref{General_Approach} presents an approach to determining explicitly equivalent models. In Section \ref{Examples}, explicitly sample-equivalent forward and backward Markov models, and explicitly sample-equivalent $CM_L$ and reciprocal models are obtained. Section \ref{Summary} contains a summary and conclusions.

\section{Definitions and Preliminaries}\label{Definitions}

Throughout the paper we consider sequences defined over $[0,N]$. The following conventions are used:
\begin{align*}
[i,j]& \triangleq \lbrace i,i+1,\ldots ,j-1,j \rbrace, i,j \in [0,N], i<j\\
[x_k]_{i}^{j} & \triangleq \lbrace x_k, k \in [i,j] \rbrace\\
 [x_k] & \triangleq [x_k]_{0}^{N}\\
x & \triangleq [x_0, \ldots , x_N]'
\end{align*}
Also, $C$ is the covariance matrix of the whole sequence $[x_k]$. The symbol ``$\setminus$" is used for set subtraction.

\begin{definition}
Two dynamic models are \textit{equivalent} if they govern the same sequence (i.e., their sequences have the same distribution).

\end{definition}

\begin{definition}\label{Explicit_Equivalent}
Two dynamic models are \textit{explicitly sample-equivalent} if, given a sample path of the dynamic noise and the boundary values of one model for an arbitrary sample path of the governed sequence, a sample path of the dynamic noise and the boundary values of the other model leading to the same sample path of the governed sequence is given explicitly. 

\end{definition}

In other words, if the relation between sample paths of dynamic noises and boundary values of two equivalent models is provided (so that the two models generate the same sample path), they become explicitly equivalent.

Forward and backward Markov, reciprocal, $CM_L$, and $CM_F$ models of \cite{Levy_Dynamic}, \cite{CM_Part_I_Conf}, \cite{CM_Part_II_A_Conf} are reviewed first. Let $[x_k]$ be a zero-mean NG sequence.

\subsection{Markov Model}
$[x_k]$ is Markov iff
\begin{align}
x_k&=M_{k,k-1}x_{k-1}+e^M_{k}, k \in [1,N] \label{Markov_Dynamic_Forward}\\
x_0&=e^M_0\label{M_BC}
\end{align}
where $[e^M_k]$ is a zero-mean white NG sequence with covariances $M_k$.  

We have
\begin{align*}
\mathcal{M}x&=e^M\\
e^M &= [(e^M_0)' , \ldots , (e^M_N)']'
\end{align*} 
where $\mathcal{M}$ is the nonsingular matrix
\begin{align}\label{M}
\left[ \begin{array}{cccccc}
I & 0 & 0 &  \cdots & 0 & 0\\
-M_{1,0} & I & 0 &  \cdots & 0 & 0\\
0 & -M_{2,1} & I & 0 & \cdots & 0\\
\vdots & \vdots & \vdots & \vdots & \vdots & \vdots \\
0 & 0 & \cdots & -M_{N-1,N-2} & I & 0\\
0 & 0 & 0 &  \cdots & -M_{N,N-1} & I
\end{array}\right]
\end{align}

A NG sequence with covariance matrix $C$ is Markov iff $C^{-1}$ is tri-diagonal given by $\eqref{CML}$ below with $D_0=\cdots=D_{N-2}=0$.

\subsection{Backward Markov Model}
$[x_k]$ is Markov iff
\begin{align}
x_{k}&=M^B_{k,k+1}x_{k+1}+e^{MB}_{k}, k \in [0,N-1]  \label{Markov_Dynamic_Backward}\\
x_N&=e^{MB}_N\label{MB_BC}
\end{align}
where $[e^{MB}_k]$ is a zero-mean white NG sequence with covariances $M^B_k$. 

We have
\begin{align*}
\mathcal{M}^Bx&=e^{MB}\\
e^{MB} &=[(e^{MB}_0)' , \ldots , (e^{MB}_{N})']'
\end{align*} 
where $\mathcal{M}^B$ is the nonsingular matrix 
\begin{align}\label{MB}
\left[ \begin{array}{cccccc}
I & -M^{B}_{0,1} & 0 &  \cdots & 0 & 0\\
0 & I & -M^B_{1,2} & 0 &  \cdots & 0\\
0 & 0 & I & -M^B_{2,3} & \cdots & 0\\
\vdots & \vdots & \vdots & \vdots & \vdots & \vdots \\
0 & 0 & \cdots & 0 & I & -M^B_{N-1,N}\\
0 & 0 & 0 &  \cdots & 0 & I
\end{array}\right]
\end{align}

\subsection{Reciprocal Model}
$[x_k]$ is reciprocal iff 
\begin{align}
R^0_kx_k-R^-_{k}x_{k-1}-R^+_{k}x_{k+1}=e^R_k, k \in [1,N-1] \label{Reciprocal_Dynamic}
\end{align}
where $[e^R_k]_0^{N}$ is a zero-mean NG sequence with $E[e^R_k(e^R_{k})']=R^0_k$, $E[e^R_k(e^R_{k+1})']=-R^+_k$, $E[e^R_k(e^R_j)']=0, |k-j|>1$, $R^+_k=(R^-_{k+1})'$, and the boundary conditions
\begin{align}
R^0_0x_0-R^-_0x_N-R^+_0x_1&=e^R_0\label{Reciprocal_Cyclic_BC1}\\
R^0_Nx_N-R^-_Nx_{N-1}-R^+_Nx_0&=e^R_N\label{Reciprocal_Cyclic_BC2}
\end{align}
where $E[e^R_N(e^R_0)']=-R^+_N$ and $R^-_0=(R^+_N)'$, and the parameters of model $\eqref{Reciprocal_Dynamic}$ and boundary conditions $\eqref{Reciprocal_Cyclic_BC1}$--$\eqref{Reciprocal_Cyclic_BC2}$ lead to a nonsingular sequence. 

We have
\begin{align*}
\mathcal{R}x&=e^R\\
e^R &= [(e^R_0)' , \ldots , (e^R_N)']'
\end{align*} 
where $\mathcal{R}$ is the nonsingular matrix
\begin{align}\label{R}
\left[ \begin{array}{cccccc}
R^0_0 & -R^+_0 & 0 &  \cdots & 0 & -R^-_0\\
-R^-_{1} & R^0_1 & -R^+_1 & 0 & \cdots & 0\\
0 & -R^-_{2} & R^0_2 & -R^-_2 & \cdots & 0\\
\vdots & \vdots & \vdots & \vdots & \vdots & \vdots \\
0 & 0 & \cdots & -R^-_{N-1} & R^0_{N-1} & -R^+_{N-1}\\
-R^+_N & 0 & 0 &  \cdots & -R^-_{N} & R^0_N
\end{array}\right]
\end{align}

A NG sequence with covariance matrix $C$ is reciprocal iff its $C^{-1}$ is cyclic tri-diagonal given by $\eqref{CML}$ with $D_1=\cdots=D_{N-2}=0$.

The reciprocal model $\eqref{Reciprocal_Dynamic}$ (with its boundary conditions) is well-posed if its parameters lead to a nonsingular covariance matrix for the whole sequence $[x_k]$. 

We call model $\eqref{Reciprocal_Dynamic}$ the reciprocal model, to distinguish it from reciprocal $CM_L$/$CM_F$ models, defined below.

\subsection{Forward $CM_c$ Models}
$[x_k]$ is $CM_c$ iff
\begin{align}
x_k&=G_{k,k-1}x_{k-1}+G_{k,c}x_c+e_k, k \in [1,N] \setminus \lbrace c \rbrace
\label{CML_Dynamic_Forward}\\
x_c&=e_c, \quad x_0=G_{0,c}x_c+e_0 \, \, (\text{for} \, \, c=N) \label{CML_Forward_BC2}
\end{align}
where $[e_k]$ is a zero-mean white NG sequence with covariances $G_k$.

For $c=0$, 
\begin{align}
\mathcal{G}^Fx&=e^F\label{Lx=e}\\
e^F& = [e_0'  , \ldots ,  e_N']'\nonumber
\end{align}
where $\mathcal{G}^F$ is the nonsingular matrix 
\begin{align}\label{F}
\left[ \begin{array}{cccccc}
I & 0 & 0 &  \cdots & 0 & 0\\
-2G_{1,0} & I & 0 &  \cdots & 0 & 0\\
-G_{2,0} & -G_{2,1} & I & 0 & \cdots & 0\\
\vdots & \vdots & \vdots & \vdots & \vdots & \vdots \\
-G_{N-1,0} & 0 & \cdots & -G_{N-1,N-2} & I & 0\\
-G_{N,0} & 0 & 0 &  \cdots & -G_{N,N-1} & I
\end{array}\right]
\end{align}

For $c=N$,
\begin{align}
\mathcal{G}^Lx&=e^L\label{Lx=e}\\
e^L& =[e_0' , \ldots ,  e_N']'\nonumber
\end{align}
where $\mathcal{G}^L$ is the nonsingular matrix
\begin{align}\label{L_2}
\left[ \begin{array}{cccccc}
I & 0 & 0 &  \cdots & 0 & -G_{0,N}\\
-G_{1,0} & I & 0 &  \cdots & 0 & -G_{1,N}\\
0 & -G_{2,0} & I & 0 & \cdots & -G_{2,N}\\
\vdots & \vdots & \vdots & \vdots & \vdots & \vdots \\
0 & 0 & \cdots & -G_{N-1,N-2} & I & -G_{N-1,N}\\
0 & 0 & 0 &  \cdots & 0 & I
\end{array}\right]
\end{align}

A NG sequence with covariance matrix $C$ is $CM_L$ ($CM_F$) iff its $C^{-1}$ is $CM_L$ ($CM_F$), defined as follows.

\begin{definition}
A symmetric positive definite matrix is $CM_L$ if it has form $\eqref{CML}$ and $CM_F$ if it has form $\eqref{CMF}$:
\begin{align}
\left[
\begin{array}{ccccccc}
A_0 & B_0 & 0 & \cdots & 0 & 0 & D_0\\
B_0' & A_1 & B_1 & 0 & \cdots & 0 & D_1\\
0 & B_1' & A_2 & B_2 & \cdots & 0 & D_2\\
\vdots & \vdots & \vdots & \vdots & \vdots & \vdots & \vdots\\
0 & \cdots & 0 & B_{N-3}' & A_{N-2}  & B_{N-2} & D_{N-2}\\
0 & \cdots & 0 & 0 & B_{N-2}' & A_{N-1} & B_{N-1}\\
D_0' & D_1' & D_2' & \cdots & D_{N-2}' & B_{N-1}' & A_N
\end{array}\right]\label{CML}\\
\left[
\begin{array}{ccccccc}
A_0 & B_0 & D_2 & \cdots & D_{N-2} & D_{N-1} & D_{N}\\
B_0' & A_1 & B_1 & 0 & \cdots & 0 & 0\\
D_2' & B_1' & A_2 & B_2 & \cdots & 0 & 0\\
\vdots & \vdots & \vdots & \vdots & \vdots & \vdots & \vdots\\
D_{N-2}' & \cdots & 0 & B_{N-3}' & A_{N-2}  & B_{N-2} & 0\\
D_{N-1}' & \cdots & 0 & 0 & B_{N-2}' & A_{N-1} & B_{N-1}\\
D_{N}' & 0 & 0 & \cdots & 0 & B_{N-1}' & A_N
\end{array}\right]\label{CMF}
\end{align}

\end{definition}
Here $A_k$, $B_k$, and $D_k$ are matrices in general.  

\begin{remark}
$[x_k]$ is reciprocal iff it obeys $\eqref{CML_Dynamic_Forward}$--$\eqref{CML_Forward_BC2}$ and 
\begin{align}
G_k^{-1}G_{k,c}=G_{k+1,k}'G_{k+1}^{-1}G_{k+1,c}
\label{CML_Condition_Reciprocal}
\end{align}
$\forall k \in [1,N-2]$ for $c=N$, and $\forall k \in [2,N-1]$ for $c=0$. Moreover, $[x_k]$ is Markov iff we also have, for $c=N$, 
\begin{align}
G_0^{-1}G_{0,N}=G_{1,0}'G_1^{-1}G_{1,N}
\end{align}
and for $c=0$, 
\begin{align}
G_{N,0}=0
\end{align}

\end{remark}

\subsection{Backward $CM_c$ Models}
$[x_k]$ is $CM_c$ iff
\begin{align}
x_k&=G^B_{k,k+1}x_{k+1}+G^B_{k,c}x_c+e^{B}_k, k \in [0,N-1] \setminus \lbrace c\rbrace
\label{CML_Dynamic_Backward}\\
x_c&=e^{B}_c, \quad x_{N}=G^{B}_{N,c}x_c+e^{B}_{c} \, \, (\text{for} \, \, c=0) \label{CML_Backward_BC2}
\end{align}
where $[e^{B}_k]$ is a zero-mean white NG sequence with covariances $G^B_k$.

\begin{remark}
$[x_k]$ is reciprocal iff it obeys $\eqref{CML_Dynamic_Backward}$--$\eqref{CML_Backward_BC2}$ and
\begin{align}
(G^B_{k+1})^{-1}G^B_{k+1,c}=(G^B_{k,k+1})'(G^B_{k})^{-1}G^B_{k,c}
\label{CMF_Condition_Reciprocal_B}
\end{align}
$\forall k \in [1,N-2]$ for $c=0$, and $\forall k \in [0,N-3]$ for $c=N$. Moreover, $[x_k]$ is Markov iff we also have, for $c=0$, 
\begin{align}
G^B_N)^{-1}G^{B}_{N,0}=(G^B_{N-1,N})'(G^B_{N-1})^{-1}G^B_{N-1,0} 
\end{align}
and for $c=N$, 
\begin{align}
G^{B}_{0,N}=0
\end{align}

\end{remark}

A forward/backward $CM_L$/$CM_F$ model of a reciprocal (Markov) sequence is called a reciprocal (Markov) forward/backward $CM_L$/$CM_F$ model.

\section{Determination of Explicitly Equivalent Dynamic Models: A Unified Approach}\label{General_Approach}

Let $[x_k]$ be a CM sequence governed by a model presented in Section \ref{Definitions}. We have
\begin{align}
T x&=\xi, \quad \xi  =[\xi_{0}' , \ldots , \xi_{N}']' \label{Model}
\end{align}
where the vector $\xi$ includes the dynamic noise and the boundary values and $P=\text{Cov}(\xi)$. Matrix $T$ is determined by parameters of the model. Note that $T$ and $P$ for a model have a specific form. So, parameters of equivalent models (in terms of each other) can be determined easily (see the first step of Proposition \ref{Equivalent_Construction_Lemma} below). $T$ is nonsingular for the forward and the backward $CM_L$, $CM_F$, and Markov models. Also, since the sequence is assumed to be nonsingular, $T$ is nonsingular for the reciprocal model, too \cite{Levy_Dynamic}. 

By Definition \ref{Explicit_Equivalent}, explicit equivalence is mutual, i.e., if model 2 is explicitly equivalent to model 1, then model 1 is also explicitly equivalent to model 2. We have the following proposition for the construction of explicitly equivalent models.

\begin{proposition}\label{Equivalent_Construction_Lemma}
Let a forward, backward $CM_L$, $CM_F$, reciprocal, or Markov model be given by
\begin{align}
T_1x=\xi \label{Model_1}
\end{align}
where $\xi =[\xi_{0}' ,  \ldots , \xi_{N}']'$ includes the dynamic noise and the boundary values, and the covariance of $\xi$ is $P_1$. Any explicitly equivalent forward or backward $CM_L$/$CM_F$/reciprocal/Markov model, denoted by
\begin{align}
T_2x=\zeta \label{Model_2}
\end{align}
with $\zeta=[\zeta_{0}' ,  \ldots , \zeta_{N}']'$ containing the corresponding dynamic noise and boundary values with covariance $P_2$, is constructed in two steps:

(1) The parameters of the equivalent model are determined based on\footnote{Due to the special structures of $T_1$, $P_1$, $T_2$, and $P_2$, parameters of model 2 can be easily obtained in terms of parameters of model 1 using $\eqref{Step_1}$. Then, having the parameters of model 2, $T_2$ and $P_2$ are known.}
\begin{align}
T_2'P_2^{-1}T_2=
T_1'P_1^{-1}T_1\label{Step_1}
\end{align}
Therefore, $T_2$ and $P_2$ can be obtained given their forms.

(2) The relationship between the dynamic noises and the boundary values of the two models are obtained through
\begin{align}
T_2'(P_2)^{-1}\zeta = T_1'(P_1)^{-1}\xi \label{Step_2}
\end{align}

\end{proposition}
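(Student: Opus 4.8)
The plan is to derive both steps from a single identity linking a model's parameters to the covariance $C$ of the governed sequence. Because $T$ is nonsingular for each model in Section~\ref{Definitions} (as noted just above), the relation \eqref{Model} inverts to $x=T^{-1}\xi$, so $C=\text{Cov}(x)=T^{-1}P(T^{-1})'=T^{-1}P(T')^{-1}$ and hence $C^{-1}=T'P^{-1}T$. Applying this to both models, equivalence of the two zero-mean NG models is precisely equality of their sequence covariances, i.e. $T_1'P_1^{-1}T_1=C^{-1}=T_2'P_2^{-1}T_2$, which is \eqref{Step_1}. Conversely, if \eqref{Step_1} holds then $\text{Cov}(x)$ computed from either model equals $(T_1'P_1^{-1}T_1)^{-1}=(T_2'P_2^{-1}T_2)^{-1}$, so the two models are equivalent. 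Thus Step~(1) characterizes equivalence, and, given the prescribed forms of $T_2$ and $P_2$, it is exactly the equation from which the parameters of model~2 are read off.

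For the explicit part I would fix an arbitrary sample path $x$ of the sequence and let $\xi=T_1x$ and $\zeta=T_2x$ be the corresponding realizations of the two models' dynamic noises and boundary values. Multiplying by $T_1'P_1^{-1}$ and $T_2'P_2^{-1}$ respectively and using $C^{-1}=T_i'P_i^{-1}T_i$ for $i=1,2$ gives $T_1'P_1^{-1}\xi=C^{-1}x=T_2'P_2^{-1}\zeta$, which is \eqref{Step_2}. Since $T_2$ and $P_2$ are nonsingular, this solves explicitly for $\zeta$ as $\zeta=P_2(T_2')^{-1}T_1'P_1^{-1}\xi$ (equivalently $\zeta=T_2T_1^{-1}\xi$), so given $\xi$ one obtains $\zeta$ in closed form. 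To confirm that this $\zeta$ is indeed the realization driving model~2 to the same $x$, I would substitute $\xi=T_1x$ and invoke \eqref{Step_1} to get $\zeta=P_2(T_2')^{-1}(T_1'P_1^{-1}T_1)x=P_2(T_2')^{-1}(T_2'P_2^{-1}T_2)x=T_2x$, and likewise check $\text{Cov}(\zeta)=T_2CT_2'=P_2$. Hence the two models produce the same sample path and are explicitly sample-equivalent in the sense of Definition~\ref{Explicit_Equivalent}.

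Once the identity $C^{-1}=T'P^{-1}T$ is in hand the algebra is routine, so the real work lies not in \eqref{Step_1} itself but in the structural claim attached to Step~(1): that the matrix equation $T_2'P_2^{-1}T_2=C^{-1}$ can be solved with $T_2$ and $P_2$ constrained to the specific sparsity patterns of the target model, and that this determines the parameters unambiguously. This is where the characterizations quoted earlier enter, since the sparsity of $T'P^{-1}T$ must match that of $C^{-1}$ for the chosen class, namely $C^{-1}$ tri-diagonal for Markov, cyclic tri-diagonal for reciprocal, and of the forms \eqref{CML}/\eqref{CMF} for $CM_L$/$CM_F$. I expect the main obstacle to be this form-matching, together with the attendant existence and essential uniqueness of the factorization within each prescribed structure, which I would verify class by class; the concrete conversions carried out in Section~\ref{Examples} are exactly the instances of this verification.
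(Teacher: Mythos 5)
Your proposal is correct and follows essentially the same route as the paper: both derive the identity $C^{-1}=T'P^{-1}T$ for each model, obtain \eqref{Step_1} from equality of the sequence covariances, and verify \eqref{Step_2} by the same pre-multiplication/substitution algebra showing $\zeta=T_2x$ and $\text{Cov}(\zeta)=P_2$. Your closing remark about form-matching and uniqueness of the parameters within each sparsity class is exactly the point the paper relegates to its footnote and the worked conversions of Section~\ref{Examples}, so nothing essential is missing.
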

\begin{proof}
Step (1): The inverse of the covariance matrix ($C^{-1}$) of the sequence governed by model $\eqref{Model_1}$ is calculated as $E[(T_1x)(T_1x)']=E[\xi \xi ']$, which leads to
\begin{align}\label{Inverse_C_1}
C^{-1}=T_1'(P_1)^{-1}T_1
\end{align}

The inverse of the covariance matrix of the sequence governed by $\eqref{Model_2}$ can be calculated as
\begin{align}\label{Inverse_C_2}
C^{-1}=T_2'(P_2)^{-1}T_2
\end{align}

In order for the two models to be explicitly equivalent, their governed sequences must have the same covariance matrix; thus we have $\eqref{Step_1}$ (i.e., the two models are equivalent). Due to the special structures of $T_1$, $P_1$, $T_2$, and $P_2$, parameters of model 2 can be easily obtained in terms of parameters of model 1 using $\eqref{Step_1}$ ($\eqref{MfbP_1}$--$\eqref{MfbP_5}$ below show such calculations for equivalent Markov models). Then, $P_2$ and $T_2$ are known. Note that parameters of model 2 calculated by $\eqref{Step_1}$ (in terms of those of model 1) are unique. It can be easily verified based on $\eqref{Step_1}$ for all models. This uniqueness can be also concluded from the definition of conditional expectation. Now, given $P_2$ and $T_2$, the relation between $\xi$ and $\zeta$ should be determined so that the two models are \textit{explicitly} equivalent.

Step (2): Let $P _2$ and $T _2$ be given. We show how $\eqref{Step_2}$ leads to an explicitly equivalent model. First, we show that $\zeta$ obtained by $\eqref{Step_2}$ has the desired dynamic noise and boundary values, i.e., its covariance is $P_2$. By $\eqref{Step_2}$, we have
\begin{align*}
T_2'&(P_2)^{-1}\text{Cov}(\zeta)(P_2)^{-1}T_2\\ &\quad \quad \quad =T_1'(P_1)^{-1}\text{Cov}(\xi )(P_1)^{-1}T_1
\end{align*}
Then, substituting $\text{Cov}(\xi)=P_1$, we obtain
\begin{align*}
\text{Cov}(\zeta)&=P_2(T_2')^{-1}T_1'(P_1)^{-1}P_1
(P_1)^{-1}T_1(T_2)^{-1}P_2\\
&=P_2(T_2')^{-1}T_1'
(P_1)^{-1}T_1(T_2)^{-1}P_2
\end{align*}
Since $T_1'P_1^{-1}T_1=
T_2'P_2^{-1}T_2$, we have $\text{Cov}(\zeta)=P_2(T_2')^{-1}T_2'(P_2)^{-1}T_2(T_2)^{-1}P_2=P_2$,
which means that $\zeta$ has the desired dynamic noise and boundary values. 

Second, we show that assuming $\eqref{Step_2}$ holds, two models $\eqref{Model_1}$ and $\eqref{Model_2}$ generate the same sample path of the sequence. Substituting $\eqref{Model_1}$ into $\eqref{Step_2}$, we obtain $T_1'(P_1)^{-1}T_1 x = T_2'(P_2)^{-1}\zeta$. Then, $P_2(T_2')^{-1}(T_1'(P_1)^{-1}T_1) x = \zeta \label{1}$. Pre-multiplying both sides by $(T _2)^{-1}$, we have
\begin{align}
(T_2'(P_2)^{-1}T_2)^{-1}(T_1'(P_1)^{-1}T_1) x = (T_2)^{-1}\zeta \label{2}
\end{align}
Since $T_1'P_1^{-1}T_1=
T_2'P_2^{-1}T_2$, we get $T_2 x = \zeta$. Therefore, $\eqref{Model_2}$ and $\eqref{Model_1}$ are explicitly equivalent if $\eqref{Step_2}$ holds. 
\end{proof}

The first step of Proposition \ref{Equivalent_Construction_Lemma}, $\eqref{Step_1}$, is to determine an equivalent model, and the second step, $\eqref{Step_2}$, makes the model explicitly equivalent. 

By Proposition \ref{Equivalent_Construction_Lemma}, given a model, one can construct an explicitly equivalent model. Assume that two equivalent models generate the same sample path of the governed sequence. What is the relation between sample paths of their dynamic noises and boundary values? The next proposition answers this question.

\begin{proposition}\label{Equivalent_Relation_Proposition}
Let two equivalent forward, backward $CM_L$, $CM_F$, reciprocal, or Markov models be given as
\begin{align}
T_1x=\xi \label{Dynamic_1}\\
T_2x=\zeta \label{Dynamic_2}
\end{align}
where $\xi=[\xi_{0}' , \ldots ,\xi_{N}']'$ and $\zeta=[\zeta_{0}' , \ldots ,\zeta_{N}']'$ contain their dynamic noises and boundary values, with covariances $P_1$ and $P_2$, respectively, and the nonsingular matrices $T_1$ and $T_2$ are determined by the parameters of the corresponding models. If the two models generate the same sample path of the governed sequence, the relationship between sample paths of their dynamic noises and boundary values is as $\eqref{Step_2}$.

\end{proposition}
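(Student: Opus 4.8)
The goal of Proposition \ref{Equivalent_Relation_Proposition} is essentially the converse direction to Step (2) of Proposition \ref{Equivalent_Construction_Lemma}: there we assumed $\eqref{Step_2}$ and deduced that the two models produce the same sample path, whereas here we assume the two models already produce the same sample path and must recover $\eqref{Step_2}$. The plan is therefore to start from the shared sample path and manipulate the two defining relations $\eqref{Dynamic_1}$ and $\eqref{Dynamic_2}$ into the claimed form.

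\textbf{Key steps.} First I would record that equivalence of the two models, together with the covariance computation already established in $\eqref{Inverse_C_1}$ and $\eqref{Inverse_C_2}$, gives $T_1'P_1^{-1}T_1 = C^{-1} = T_2'P_2^{-1}T_2$; this identity is the algebraic engine of the whole argument. Next, fix a sample path $x$ that both models generate, so that simultaneously $\xi = T_1 x$ and $\zeta = T_2 x$ hold for this particular path. The aim is to eliminate $x$ and relate $\xi$ and $\zeta$ directly. From $\xi = T_1 x$ and the nonsingularity of $T_1$ (noted in the paragraph following $\eqref{Model}$, valid for all the model classes considered including the reciprocal one since the sequence is nonsingular), I would write $x = T_1^{-1}\xi$ and substitute into $\zeta = T_2 x$ to get $\zeta = T_2 T_1^{-1}\xi$. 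Then I would left-multiply by $T_2'P_2^{-1}$ to obtain $T_2'P_2^{-1}\zeta = T_2'P_2^{-1}T_2 T_1^{-1}\xi$, and apply the key identity $T_2'P_2^{-1}T_2 = T_1'P_1^{-1}T_1$ to rewrite the right-hand side as $T_1'P_1^{-1}T_1 T_1^{-1}\xi = T_1'P_1^{-1}\xi$. This yields exactly $T_2'P_2^{-1}\zeta = T_1'P_1^{-1}\xi$, which is $\eqref{Step_2}$, completing the derivation.

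\textbf{Main obstacle.} The manipulations are short and linear, so the only genuine content is justifying the two nonsingularity facts the argument leans on, namely that $T_1$ is invertible (so that $x = T_1^{-1}\xi$ is legitimate) and that the governed-sequence covariance is the \emph{same} $C$ for both models. The latter is precisely what equivalence in the sense of Definition 2.1 buys us, and combined with $\eqref{Inverse_C_1}$--$\eqref{Inverse_C_2}$ it delivers the key identity; the former is guaranteed for every model class in Section \ref{Definitions} by the nonsingularity remarks accompanying each model and reiterated after $\eqref{Model}$. One subtlety worth a sentence is that the statement concerns a \emph{per-sample-path} relationship: the identity $\eqref{Step_2}$ is derived pointwise for the common realization of $x$, and only afterward does one recognize it as the same deterministic linear relation $\eqref{Step_2}$ that couples the noise/boundary vectors of the two models for every realization. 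Because the coefficient matrices $T_1'P_1^{-1}$ and $T_2'P_2^{-1}$ are fixed (sample-path independent), the relation obtained for one path is in fact the universal relation, which is exactly why explicit sample-equivalence in the sense of Definition \ref{Explicit_Equivalent} holds.
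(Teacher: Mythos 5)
Your proof is correct and is essentially the argument the paper intends: the paper states this proposition without a separate proof because it follows immediately from the identities $\eqref{Inverse_C_1}$--$\eqref{Inverse_C_2}$ and the equivalence relation $\eqref{Step_1}$ established in the proof of Proposition \ref{Equivalent_Construction_Lemma}, which is exactly how you use them (fix the common sample path, eliminate $x$ via $x=T_1^{-1}\xi$, and apply $T_1'P_1^{-1}T_1=T_2'P_2^{-1}T_2$). Your intermediate relation $\zeta = T_2T_1^{-1}\xi$, i.e. $T_1^{-1}\xi = T_2^{-1}\zeta$, is also precisely the content of the remark following the proposition, which records $\eqref{Step_2}$ in that equivalent form $\eqref{Condition_Equivalent_2}$.
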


\begin{remark}
By $\eqref{Step_1}$, $\eqref{Step_2}$ is equivalent to
\begin{align}\label{Condition_Equivalent_2}
T_1^{-1}\xi  = T_2^{-1}\zeta
\end{align}

\end{remark}

Although $\eqref{Condition_Equivalent_2}$ looks simpler, for the construction of explicitly equivalent models, $\eqref{Step_2}$ is preferred. This is explained as follows. It can be seen that the matrices $P_i$, $i=1,2$, in $\eqref{Step_2}$ corresponding to the forward or backward $CM_L$, $CM_F$, and Markov models are block-diagonal, and their inverses are easily calculated. Also, for the reciprocal model, no calculation is needed because we have $P=T$ \cite{Levy_Dynamic}. However, calculation of the inverse of $T_i$, $i=1,2$, in $\eqref{Condition_Equivalent_2}$ is not straightforward in general.

Note that Proposition \ref{Equivalent_Construction_Lemma} and Proposition \ref{Equivalent_Relation_Proposition} are not restricted to (forward/backward) $CM_L$, $CM_F$, reciprocal, and Markov models. The results work for other models satisfying the required conditions.

\section{Examples of Explicitly Sample-Equivalent Models}\label{Examples}

For illustration, two examples of explicitly sample-equivalent models obtained using the approach of Proposition \ref{Equivalent_Construction_Lemma} are presented.

\subsection{Forward and Backward Markov Models}

Given a forward Markov model $\eqref{Markov_Dynamic_Forward}$--$\eqref{M_BC}$ for $[x_k]$, by $\eqref{Step_1}$ parameters of a backward Markov model $\eqref{Markov_Dynamic_Backward}$--$\eqref{MB_BC}$ for $[x_k]$ are obtained in terms of those of the forward one as follows. For $k=2, 3, \ldots, N$,
\begin{align}
(M_0^B)^{-1}=&M_0^{-1}+M_{1,0}'M_1^{-1}M_{1,0}\label{MfbP_1}\\
M_{0,1}^B=&M_0^BM_{1,0}'M_1^{-1}\label{MfbP_2}\\
(M_{k-1}^B)^{-1}=&M_{k-1}^{-1}+M_{k,k-1}'M_{k}^{-1}M_{k,k-1}- \nonumber \\
&(M_{k-2,k-1}^B)'(M^B_{k-2})^{-1}M_{k-2,k-1}^B \label{MfbP_3}
\end{align}
\begin{align}
M^B_{k-1,k}=&M^B_{k-1}M_{k,k-1}'M_{k}^{-1} \label{MfbP_4}\\
(M^B_N)^{-1}=&M_{N}^{-1}-(M^B_{N-1,N})'(M^B_{N-1})^{-1}M^B_{N-1,N}\label{MfbP_5}
\end{align}

Then, by $\eqref{Step_2}$, the relationship between dynamic noises and boundary values of the two models is:
\begin{align}
(M^B_0)^{-1}e^{MB}_0=&M_0^{-1}e^M_0-M_{1,0}'M_1^{-1}e^M_1\label{Mfb_1}\\
(M^B_{k})^{-1}e^{MB}_{k}=&(M^B_{k-1,k})'(M^B_{k-1})^{-1}e^{MB}_{k-1} + M_{k}^{-1}e^M_{k} \nonumber\\
-&M_{k+1,k}'M_{k+1}^{-1}e^M_{k+1}, k \in [1,N-1] \label{Mfb_2}\\
(M^B_N)^{-1}e^{MB}_N=&(M^B_{N-1,N})'(M^B_{N-1})^{-1}e^{MB}_{N-1}+
M_{N}^{-1}e^M_{N}\label{Mfb_3}
\end{align} 

By the above equations, given a backward model, one can also obtain the explicitly equivalent forward model. $\eqref{MfbP_1}$--$\eqref{MfbP_5}$ and $\eqref{Mfb_1}$--$\eqref{Mfb_3}$ give explicitly equivalent forward and backward models no matter if the state transition matrix is singular or nonsingular. Based on $\eqref{Mfb_1}$--$\eqref{Mfb_3}$, we can verify the required condition in the derivation of the two-filter smoother \cite{Wax_Kailath}--\cite{Alan_Wilskey}, for singular and nonsingular state transition matrices.

\subsection{Reciprocal $CM_L$ and Reciprocal Models}

Let a $CM_L$ model governing a reciprocal sequence $[x_k]$ be given. Taking the first step of Proposition \ref{Equivalent_Construction_Lemma}, parameters of the reciprocal model governing $[x_k]$ are obtained from parameters of the $CM_L$ model as follows.
\begin{align}
R^0_0&=G_0^{-1}+G_{1,0}'G_1^{-1}G_{1,0}\label{CML_1}\\
R^0_k&=G_k^{-1}+G_{k+1,k}'G_{k+1}^{-1}G_{k+1,k}\label{CML2}, k \in [1,N-2] \\
R^0_{N-1}&=G_{N-1}^{-1}\label{CML3}\\
R^0_{N}&=G_N^{-1}+\sum _{k=1}^{N-1} G_{k,N}'G_k^{-1}G_{k,N} + G_{0,N}'G_0^{-1}G_{0,N}\label{CML_2}\\
R^+_k&=G_{k+1,k}'G_{k+1}^{-1}, k \in [0,N-2] \label{CML5}\\
R^+_{N-1}&=G_{N-1}^{-1}G_{N-1,N}\label{CML6}\\
R^-_0&=G_0^{-1}G_{0,N}-G_{1,0}'G_1^{-1}G_{1,N}\label{CML_3}
\end{align}

Then, taking the second step of Proposition \ref{Equivalent_Construction_Lemma}, the relationship between dynamic noises and boundary values of the two models is as follows. For $\eqref{CML_Dynamic_Forward}$--$\eqref{CML_Forward_BC2}$, we have
\begin{align}
e^R_0=&G_0^{-1}e_0-G_{1,0}'G_1^{-1}e_1\label{LR2_1}\\
e^R_k=&G_k^{-1}e_k-G_{k+1,k}'G_{k+1}^{-1}e_{k+1}, k \in [1,N-2]\label{LR_2}\\
e^R_{N-1}=&G_{N-1}^{-1}e_{N-1}\label{LR_3}\\
e^R_N=&-\sum _{k=1}^{N-1}G_{k,N}'G_k^{-1}e_k+G_N^{-1}e_N-G_{0,N}'G_0^{-1}e_0\label{LR2_4}
\end{align}

By the above equations, given a reciprocal model, one can obtain the explicitly equivalent reciprocal $CM_L$ model. This is important because the reciprocal $CM_L$ model can be more easily applicable than the reciprocal model.

\section{Summary and Conclusions}\label{Summary}

Relationships between dynamic models governing different classes of Gaussian conditionally Markov (CM) sequences (including Markov, reciprocal, and so-called $CM_L$ and $CM_F$ sequences) have been studied. One CM sequence can obey different models. Given one, it is desirable to obtain other models for the same sequence. Two models are called \textit{equivalent} if they govern the same random sequence (i.e., their sequences have the same distribution). In some problems it is not sufficient to have only equivalent models---we need to know the relationship between dynamic noises and boundary values for the equivalent models to have the same sample path of the governed sequence. Two equivalent models are \textit{explicitly sample-equivalent} if such a relationship is given. 

A unified approach has been presented, such that given a forward, backward $CM_L$, $CM_F$, reciprocal, or Markov model, any explicitly equivalent such model can be obtained. This approach does not require any assumptions (e.g., nonsingularity) about the matrix coefficients of the models. So, unlike \cite{Verghese} (which is restricted to nonsingular state transition matrices), the proposed approach can be used to obtain a backward Markov model explicitly equivalent to a forward Markov model with either a singular or nonsingular state transition matrix.

\subsubsection*{Acknowledgments}

Research was supported by NASA Phase03-06 through grant NNX13AD29A.

\end{document}